\newcommand\ie{{\em i.e.}}
\newcommand\eg{{\em e.g. }}
\def\B{\mathscr B}
\def\C{\mathbb C}
\def\d{\mathrm{d}}
\def\H{\mathcal H}
\def\L{\mathscr L}
\def\N{\mathbb N}
\def\R{\mathbb R}
\def\U{\mathscr U}
\def\dom{\mathcal D}
\def\div{\mathop{\mathrm{div}}\nolimits}
\def\e{\mathop{\mathrm{e}}\nolimits}
\def\re{\mathop{\mathsf{Re}}\nolimits}
\def\linf{\mathsf{L}^{\:\!\!\infty}}
\def\ltwo{\mathsf{L}^{\:\!\!2}}
\def\slim{\mathop{\hbox{\rm s-}\lim}\nolimits}
\newtheorem{Theorem}{Theorem}[section]
\newtheorem{Lemma}[Theorem]{Lemma}
\newtheorem{Assumption}[Theorem]{Assumption}
\begin{document}

%--------------------------------------------------------------------------------------
% Title
%--------------------------------------------------------------------------------------

\title{Spectral analysis of time changes for horocycle flows}

\author{R. Tiedra de Aldecoa$^1$\footnote{Supported by the Fondecyt Grant 1090008 and
by the Iniciativa Cientifica Milenio ICM P07-027-F ``Mathematical Theory of Quantum
and Classical Magnetic Systems'' from the Ministerio de Econom\'ia, Fomento y
Turismo.}
}
\date{\small}
\maketitle \vspace{-1cm}

\begin{quote}
\emph{
\begin{itemize}
\item[$^1$] Facultad de Matem\'aticas, Pontificia Universidad Cat\'olica de Chile,\\
Av. Vicu\~na Mackenna 4860, Santiago, Chile
\item[] \emph{E-mail\;\!:} rtiedra@mat.puc.cl
\end{itemize}
}
\end{quote}

%--------------------------------------------------------------------------------------
% Abstract
%--------------------------------------------------------------------------------------

\begin{abstract}
We prove (under the condition of A. G. Kushnirenko) that all time changes for the
horocycle flow have purely absolutely continuous spectrum in the orthocomplement of
the constant functions. This provides an answer to a question of A. Katok and J.-P.
Thouvenot on the spectral nature of time changes for horocycle flows. Our proofs rely
on positive commutator methods for self-adjoint operators.
\end{abstract}

\textbf{2000 Mathematics Subject Classification\;\!:} 37C10, 37D40, 81Q10, 58J51.

\smallskip

\textbf{Keywords\;\!:} Horocycle flow, time change, spectral analysis, commutator
methods.

%--------------------------------------------------------------------------------------
\section{Introduction}
\setcounter{equation}{0}
%--------------------------------------------------------------------------------------

The purpose of this note is to provide an answer to a question of A. Katok and J.-P.
Thouvenot on the spectral nature of time changes for horocycle flows.

The set-up is the standard one. Consider the unit tangent bundle $M:=T^1\Sigma$ of a
compact Riemann surface $\Sigma$ of genus $\ge2$. The $3$-manifold $M$ carries a
probability measure $\mu$ which is preserved by two distinguished one-parameter
groups of diffeomorphisms\;\!: the horocycle flow $\{F_{1,t}\}_{t\in\R}$ and the
geodesic flow $\{F_{2,t}\}_{t\in\R}$. One associates to those flows vector fields
$X_j$, Lie derivatives $\L_{X_j}$ and unitary groups $\{U_j(t)\}_{t\in\R}$ in
$\ltwo(M,\mu)$ in the usual way. It is a classical result that the horocycle flow
$\{F_{1,t}\}_{t\in\R}$ is uniquely ergodic \cite{Fur73} and mixing of all orders
\cite{Mar78}, and that the unitary group $\{U_1(t)\}_{t\in\R}$ has countable Lebesgue
spectrum \cite{Par53}. Furthermore, A. G. Kushnirenko \cite[Thm.~2]{Kus74} has proved
that all time changes of the horocycle flow are strongly mixing under a condition
which holds if the time change is sufficiently small in the $C^1$ topology. Namely,
if $f\in C^\infty(M)$ satisfies $f>0$ and $f-\L_{X_2}(f)>0$, then the flow of the
vector field $fX_1$ is strongly mixing. This implies that the unitary group
associated to $fX_1$ has purely continuous spectrum, except at $1$, where it has a
simple eigenvalue.

Nothing more is known about the spectral properties of the time change $fX_1$ (see
the comments in \cite[Sec.~1]{AGU11} and \cite[Sec.~1]{Kus74}). However, as pointed
out by A. Katok and J.-P. Thouvenot in \cite[Sec.~6.3.1]{KT06}, it looks plausible
that the unitary group associated to $fX_1$ has purely absolutely continuous or
Lebesgue spectrum. In fact, A. Katok and J.-P. Thouvenot state as a conjecture the
stability of the countable Lebesgue spectrum (see \cite[Conject.~6.8]{KT06}). In the
present note, we give an answer to the first interrogation of these authors by
proving that the unitary group associated to $fX_1$ has purely absolutely continuous
spectrum outside $\{1\}$ under the condition of A. G. Kushnirenko.

Our proof relies on a refined version \cite{ABG,Sah97_2} of a commutator method
introduced by E. Mourre \cite{Mou81}. It uses as a starting point the well-known
commutation relation satisfied by the unitary groups of the horocycle flow and the
geodesic flow\;\!:
\begin{equation}\label{abs_com}
U_2(s)\;\!U_1(t)\;\!U_2(-s)=U_1(\e^st),\quad s,t\in\R.
\end{equation}
To some extent, this approach has been suggested to us by the proof of A. G.
Kushnirenko itself, since it already took advantage of commutator identities linking
the vector fields $X_1,X_2$ and $fX_1$. We also aknowledge the influence of the
article \cite{FRT12} on commutator methods for unitary operators, and we refer to
\cite{AGU11,Fay01,Fay06,FKW01,GM10,Hum74,Tot66} for related works on ergodic and
spectral properties of time changes. In the future, we hope that commutators methods
could be used to derive spectral properties of other classes of flows than the
horocycle flows considered here.

Here is a brief description of the note. In Section \ref{Sec_com}, we recall some
definitions and results on positive commutator methods for self-adjoint operators. In
Section \ref{Sec_abstract}, we introduce a generalisation of the setting presented
above\;\!: We consider on an abstract (possibly noncompact) $n$-manifold vector
fields $X_1,X_2$ and flows $\{F_{1,t}\}_{t\in\R},\{F_{2,t}\}_{t\in\R}$ with unitary
groups satisfying \eqref{abs_com}. Under an assumption generalising the one of A. G.
Kushnirenko (see Assumption \ref{ass_f}) we show that the self-adjoint operator
associated to the time change $fX_1$ has purely absolutely continuous spectrum,
except at $\;\!0$, where it may have an eigenvalue (see Theorem \ref{thm_spec}). We
use the theory of Section \ref{Sec_com} to prove this result. In Section
\ref{Sec_horo} we apply this abstract result to the horocycle flow, taking into
account the fact that the horocycle flow is strongly mixing under the condition A. G.
Kushnirenko. This leads to the desired result, namely, that the unitary group
associated a time change of the horocycle flow has purely absolutely continuous
spectrum outside $\{1\}$ (see Theorem \ref{main_theorem}).

%--------------------------------------------------------------------------------------
\section{Positive commutator methods}\label{Sec_com}
\setcounter{equation}{0}
%--------------------------------------------------------------------------------------

We recall in this section some facts on positive commutator methods borrowed from
\cite{ABG} and \cite{Sah97_2} (see also the original paper \cite{Mou81} of E.
Mourre). Let $\H$ be a Hilbert space with norm $\|\;\!\cdot\;\!\|_\H$ and scalar
product $\langle\;\!\cdot\;\!,\;\!\cdot\;\!\rangle_\H$, and denote by $\B(\H)$ the
set of bounded linear operators on $\H$. Let also $A$ be a self-adjoint operator in
$\H$ with domain $\dom(A)$, and $S\in\B(\H)$. For any $k\in\N$, we say that $S$
belongs to $C^k(A)$, with notation $S\in C^k(A)$, if the map
\begin{equation}\label{eq_group}
\R\ni t\mapsto\e^{-itA}S\e^{itA}\in\B(\H)
\end{equation}
is strongly of class $C^k$. In the case $k=1$, one has $S\in C^1(A)$ if the quadratic
form
$$
\dom(A)\ni\varphi\mapsto\big\langle\varphi,iSA\varphi\big\rangle_\H
-\big\langle A\varphi,iS\varphi\big\rangle_\H\in\C
$$
is continuous for the topology induced by $\H$ on $\dom(A)$. We denote by $[iS,A]$
the bounded operator associated with the continuous extension of this form, or
equivalently the strong derivative of the function \eqref{eq_group} at $t=0$.

If $H$ is a self-adjoint operator in $\H$ with domain $\dom(H)$ and spectrum
$\sigma(H)$, we say that $H$ is of class $C^k(A)$ if $(H-z)^{-1}\in C^k(A)$ for some
$z\in\C\setminus\sigma(H)$. If $H$ is of class $C^1(A)$, then the quadratic form
$$
\dom(A)\ni\varphi\mapsto\big\langle\varphi,(H-z)^{-1}A\;\!\varphi\big\rangle_\H
-\big\langle A\;\!\varphi,(H-z)^{-1}\varphi\big\rangle_\H\in\C
$$
extends continuously to a bounded form defined by the operator
$\big[(H-z)^{-1},A\big]\in\B(\H)$. Furthermore, the set $\dom(H)\cap\dom(A)$ is a
core for $H$ and the quadratic form
$$
\dom(H)\cap\dom(A)\ni\varphi\mapsto\big\langle H\varphi,A\varphi\big\rangle_\H
-\big\langle A\varphi,H\varphi\big\rangle_\H\in\C
$$
is continuous in the topology of $\dom(H)$ \cite[Thm.~6.2.10(b)]{ABG}. This form
extends uniquely to a continuous quadratic form on $\dom(H)$ which can be identified
with a continuous operator $[H,A]$ from $\dom(H)$ to the adjoint space $\dom(H)^*$.
In addition, the following relation holds in $\B(\H):$
\begin{equation}\label{2com}
\big[(H-z)^{-1},A\big]=-(H-z)^{-1}[H,A](H-z)^{-1}.
\end{equation}

Let $E^H(\;\!\cdot\;\!)$ denote the spectral measure of the self-adjoint operator
$H$, and assume that $H$ is of class $C^1(A)$. Then, the operator
$E^H(J)\big[iH,A\big]E^H(J)$ is bounded and self-adjoint for each bounded Borel set
$J\subset\R$. If there exists a number $a>0$ such that
$$
E^H(J)\big[iH,A\big]E^H(J)\ge aE^H(J),
$$
then one says that $H$ satisfies a strict Mourre estimate on $J$. The main
consequence of such an estimate is to imply a limiting absorption principle for $H$
on $J$ if $H$ is also of class $C^2(A)$. This in turns implies that $H$ has no
singular spectrum in $J$. We recall here a version of this result valid even if $H$
has no spectral gap (see \cite[Sec.~7.1.2]{ABG} and \cite[Thm.~0.1]{Sah97_2} for the
most general version of this result)\;\!:

\begin{Theorem}\label{thm_abs}
Let $H$ and $A$ be self-ajoint operators in a Hilbert space $\H$. Suppose that $H$ is
of class $C^2(A)$ and satisfies a strict Mourre estimate on a bounded Borel set
$J\subset\R$. Then, $H$ has no singular spectrum in $J$.
\end{Theorem}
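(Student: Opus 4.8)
The plan is to obtain the absence of singular spectrum in $J$ as the conjunction of two facts: $H$ has no eigenvalue in $J$ (from the Virial theorem), and $H$ has no singular continuous spectrum in $J$ (from a limiting absorption principle). The first is short: if $H\psi=\lambda\psi$ with $\lambda\in J$ and $\psi\ne0$, then since $H$ is of class $C^1(A)$ the Virial theorem (see \cite{ABG}) gives $\langle\psi,[iH,A]\psi\rangle=0$; writing $\psi=E^H(J)\psi$ this reads $\big\langle\psi,E^H(J)[iH,A]E^H(J)\psi\big\rangle=0$, contradicting the strict Mourre estimate $\big\langle\psi,E^H(J)[iH,A]E^H(J)\psi\big\rangle\ge a\|\psi\|^2>0$. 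Hence $\sigma(H)\cap J$ contains no eigenvalue.

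For the limiting absorption principle I would run Mourre's differential inequality argument, in the form that never uses a spectral gap. Fix $\lambda_0$ in the interior of $J$ and choose $\eta\in C^\infty_{\mathrm c}(\R)$ with $0\le\eta\le1$, $\eta=1$ near $\lambda_0$ and $\mathrm{supp}\,\eta\subset J$. Since $\eta(H)=E^H(J)\eta(H)$, the strict estimate immediately yields the positivity
\[
\eta(H)\,[iH,A]\,\eta(H)\ \ge\ a\,\eta(H)^2\qquad\text{in }\B(\H),
\]
while the $C^2(A)$ hypothesis, through the Helffer--Sjöstrand functional calculus and \eqref{2com}, ensures $\eta(H)\in C^2(A)$ with good control of its commutators with $A$. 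Regularising $A$ by the bounded self-adjoint operators $A_\varepsilon:=A(1+\varepsilon^2A^2)^{-1}\in\B(\H)$ (so that $A_\varepsilon\to A$ strongly and $\eta(H)[iH,A_\varepsilon]\eta(H)$ stays uniformly bounded), one forms, for $z=\lambda_0+i\mu$ with $\mu\in(0,1]$, a suitably regularised resolvent $G_\varepsilon(z)$ built from $H$, $z$ and $\eta(H)[iH,A_\varepsilon]\eta(H)$, and studies the scalar functions $f_\varepsilon(\mu):=\big\langle\varphi,\langle A\rangle^{-s}\eta(H)\,G_\varepsilon(z)\,\eta(H)\langle A\rangle^{-s}\varphi\big\rangle$, where $\langle A\rangle:=(1+A^2)^{1/2}$, $s\in(\tfrac12,1)$ and $\varphi\in\H$. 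Differentiating $f_\varepsilon$ in $\mu$ and using the displayed positivity to dominate the main contribution, one obtains a differential inequality for $|f_\varepsilon(\mu)|$ whose error terms are bounded, uniformly in $\varepsilon$ and $\mu$ and locally uniformly in $\lambda_0$, thanks to the $C^2(A)$ regularity. A Gronwall-type integration then gives $\sup_{0<\mu\le1}\big\|\langle A\rangle^{-s}\eta(H)(H-\lambda_0-i\mu)^{-1}\eta(H)\langle A\rangle^{-s}\big\|\le C$, and a standard Hölder-continuity argument upgrades this uniform bound to the existence of the boundary values as $\mu\downarrow0$. Covering the compact subsets of the interior of $J$ gives the limiting absorption principle on $J$; it implies that the spectral measure $E^H$, restricted to $J$, is absolutely continuous on the dense subspace $\Ran\langle A\rangle^{-s}$, hence that $H$ has no singular continuous spectrum in $J$. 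Combined with the absence of eigenvalues, this shows $H$ has purely absolutely continuous — in particular no singular — spectrum in $J$.

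The main obstacle is the control of the error terms in the differential inequality: the argument forces one to commute the weights $\langle A\rangle^{-s}$ and the regularising factors $(1+\varepsilon^2A^2)^{-1}$ past $H$, past $\eta(H)$ and past the resolvent, and every resulting remainder must be estimated uniformly in the regularisation parameters. This is precisely where $C^2(A)$, and not merely $C^1(A)$, is indispensable: it guarantees that $[H,A]$ is itself of class $C^1(A)$, so that the iterated commutator $\big[[H,A],A\big]$ is a bounded form on $\dom(H)$, and that $\eta(H)\in C^2(A)$; with only $C^1(A)$ one cannot iterate the commutator expansions and the estimate fails to close.
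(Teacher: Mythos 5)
The paper does not actually prove Theorem \ref{thm_abs}: it is imported as a known result from \cite[Sec.~7.1.2]{ABG} and \cite[Thm.~0.1]{Sah97_2}, so there is no internal proof to compare against. Your decomposition --- no eigenvalues in $J$ by the virial theorem, no singular continuous spectrum by a limiting absorption principle obtained from Mourre's differential-inequality method --- is exactly the strategy of those references. The first half is complete and correct as written: $C^2(A)\subset C^1(A)$, the virial theorem applies, and the strict estimate (which carries no compact error term) excludes eigenvalues in $J$.

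The second half, however, is a roadmap rather than a proof. Everything that makes the theorem nontrivial is named but not executed: the precise construction of the regularised resolvent $G_\varepsilon(z)$ (in the standard treatments one inserts $M:=\eta(H)\big[iH,A\big]\eta(H)\ge0$ as an artificial dissipative term in $H-z-i\varepsilon M$ and runs the differential inequality in the parameter $\varepsilon$, not in $\mu=\im z$ as you write), the uniform-in-$\varepsilon$ control of the commutator remainders, the Gronwall step, and the H\"older-continuity upgrade. Two smaller points. First, your covering argument runs over compact subsets of the \emph{interior} of $J$, whereas the statement allows an arbitrary bounded Borel set $J$; the cited references formulate the conclusion so as to cover this, and in the paper's application (where the estimate holds on \emph{every} bounded Borel subset of $(0,\infty)$) the distinction is harmless, but as written your argument does not literally reach a Borel set with empty interior. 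Second, in the paper the theorem is applied to $H^2\ge0$, which has a spectral gap, so the gap-free refinement of \cite{Sah97_2} that you are sketching is stronger than what is actually needed there. In short: you have correctly identified the standard argument and fully proved the eigenvalue-free part, but the analytic core of the limiting absorption principle is left as a black box --- which is, to be fair, exactly how the paper itself treats it.
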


%--------------------------------------------------------------------------------------
\section{Spectral analysis of time changes for abstract flows}\label{Sec_abstract}
\setcounter{equation}{0}
%--------------------------------------------------------------------------------------

Let $M$ be a $C^\infty$ manifold of dimension $n\ge1$ with volume form $\Omega$, and
let $\{F_{j,t}\}_{t\in\R}$, $j=1,2$, be (nontrivial) $C^\infty$ complete flows on $M$
preserving the measure $\mu_\Omega$ induced by $\Omega$. Then, it is known that the
operators
$$
U_j(t)\;\!\varphi:=\varphi\circ F_{j,t},\quad\varphi\in C^\infty_{\rm c}(M),
$$
define strongly continuous unitary groups $\{U_j(t)\}_{t\in\R}$ in the Hilbert space
$\H:=\ltwo(M,\mu_\Omega)$ (here $C^\infty_{\rm c}(M)$ stands for the space of
$C^\infty$ functions with compact support in $M$). Since $C^\infty_{\rm c}(M)$ is
dense in $\H$ and left invariant by $\{U_j(t)\}_{t\in\R}$, it follows from Nelson's
theorem \cite[Prop.~5.3]{Amr09} that the generator of the group $\{U_j(t)\}_{t\in\R}$
$$
H_j\varphi:=\slim_{t\to0}it^{-1}\big\{U_j(t)-1\big\}\varphi,
\quad\varphi\in\dom(H_j):=\left\{\varphi\in\H
\mid\lim_{t\to0}|t|^{-1}\big\|\big\{U_j(t)-1\big\}\varphi\big\|_\H<\infty\right\}
$$
is essentially self-adjoint on $C^\infty_{\rm c}(M)$. In fact, a direct calculation
shows that
$$
H_j\varphi:=-i\;\!\L_{X_j}\varphi,\quad\varphi\in C^\infty_{\rm c}(M),
$$
where $X_j$ is the (divergence-free) vector field associated to
$\{F_{j,t}\}_{t\in\R}$ and $\L_{X_j}$ the corresponding Lie derivative. Now, suppose
that there exists a $C^1$ isomorphism $e:(\R,+)\to\big((0,\infty),\;\!\cdot\;\!\big)$
such that
\begin{equation}\label{rel_com}
U_2(s)\;\!U_1(t)\;\!U_2(-s)=U_1\big(e(s)\;\!t\big)\quad\hbox{for all }s,t\in\R.
\end{equation}
Then, for each $t\neq0$, $U_1(t)$ has homogeneous Lebesgue spectrum (that is, the
spectrum $\sigma(H_1)$ of $H_1$ covers $\R$, and $\sigma(H_1)\setminus\{0\}$ is
purely Lebesgue with uniform multiplicity, see \cite[Prop.~1.23]{KT06}). Furthermore,
if $\mu_\Omega(M)<\infty$, then any constant function on $M$ is an eigenvector of
$U_1(t)$ with eigenvalue $1$ (in some cases, as when the system
$(M,\mu_\Omega,F_{1,t})$ is ergodic, $1$ is even a simple eigenvalue of $U_1(t)$). By
applying the strong derivative $i\hspace{1pt}\d/\d t$ at $t=0$ in \eqref{rel_com},
one gets that $U_2(s)H_1U_2(-s)\varphi=e(s)H_1\varphi$ for each
$\varphi\in C^\infty_{\rm c}(M)$. Since $C^\infty_{\rm c}(M)$ is a core for $H_1$,
one infers that $H_1$ is $H_2$-homogeneous in the sense of \cite{BG91}; namely,
\begin{equation}\label{eq_homo1}
U_2(s)H_1U_2(-s)=e(s)H_1\quad\hbox{on}\quad\dom(H_1).
\end{equation}
It follows that $H_1$ is of class $C^\infty(H_2)$ with
\begin{equation}\label{com_H1H2}
\big[iH_1,H_2\big]=e'(0)H_1.
\end{equation}

Now, consider a vector field with the same orientation and colinear to the vector
field $X_1$, that is, a vector field $fX_1$ where $f\in C^\infty(M)$ satisfies
$f\ge\delta_f$ for some $\delta_f>0$ and $f\in\linf(M)$. The vector field $fX_1$ has
the same integral curves as $X_1$, but with reparametrised time coordinate. Indeed,
it is known (see \cite[Sec.~1]{Hum74} and \cite[Chap.~2.2]{CFS82} in the compact
case) that the formula
$$
t=\int_0^{h(p,t)}\frac{\d s}{f\big(F_{1,s}(p)\big)}\;\!,\quad(p,t)\in M\times\R,
$$
defines for each $p\in M$ a strictly increasing function $\R\ni t\mapsto h(p,t)\in\R$
satisfying $h(p,0)=0$ and $\lim_{t\to\pm\infty}h(p,t)=\pm\infty$. Furthermore, the
implicit function theorem implies that the map $t\mapsto h(p,t)$ is $C^\infty$ with
$\frac\d{\d t}h(p,t)=f\big(F_{1,h(p,t)}(p)\big)$. Therefore, the function
$\R\ni t\mapsto\widetilde F_{1,t}(p)\in M$ given by
$\widetilde F_{1,t}(p):=F_{1,h(p,t)}(p)$ satisfies the initial value problem 
$$
\frac\d{\d t}\;\!\widetilde F_1(p,t)
=(fX_1)_{\widetilde F_1(p,t)}\;\!,
\quad\widetilde F_1(p,0)=p,
$$
meaning that $\{\widetilde F_{1,t}\}_{t\in\R}$ is the flow of $fX_1$. Since the
divergence $\div_{\Omega/f}(fX_1)$ of $fX_1$ with respect to the volume form
$\Omega/f$ is zero (see \cite[Prop.~2.5.23]{AM78}), it follows by a standard result
\cite[Prop.~2.6.14]{AM78} that the operator
$$
\widetilde H\varphi
:=-i\;\!\L_{fX_1}\varphi
\equiv fH_1\varphi,
\quad\varphi\in C^\infty_{\rm c}(M),
$$
is essentially self-adjoint in $\widetilde\H:=\ltwo(M,\mu_\Omega/f)$. Its closure is
denoted by the same symbol.

In the next lemma, we introduce two auxiliary operators which will be useful for the
spectral analysis of $\widetilde H$.

\begin{Lemma}\label{Lemma_a}
Let $f\in C^\infty(M)$ be such that $f\ge\delta_f$ for some $\delta_f>0$ and
$f\in\linf(M)$. Then,
\begin{enumerate}
\item[(a)] The operator
$$
\U:\H\to\widetilde\H,\quad\varphi\mapsto f^{1/2}\varphi,
$$
is unitary with adjoint $\U^*:\widetilde\H\to\H$ given by $\U^*\psi=f^{-1/2}\psi$.
\item[(b)] The symmetric operator
$$
H\varphi:=f^{1/2}H_1f^{1/2}\varphi,\quad\varphi\in C^\infty_{\rm c}(M),
$$
is essentially self-adjoint in $\H$ (and its closure is denoted by the same symbol).
\item[(c)] For each $z\in\C\setminus\R$, the operator $H_1+zf^{-1}$ is invertible
with bounded inverse, and satisfies
\begin{equation}\label{eq_res}
(H+z)^{-1}=f^{-1/2}\big(H_1+zf^{-1}\big)^{-1}f^{-1/2}.
\end{equation}
\end{enumerate}
\end{Lemma}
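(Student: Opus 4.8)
The plan is to prove the three points in turn, relying throughout on two elementary facts: the norms of $\H$ and $\widetilde\H$ are equivalent (since $\d(\mu_\Omega/f)=f^{-1}\,\d\mu_\Omega$ with $\|f\|_{\linf(M)}^{-1}\le f^{-1}\le\delta_f^{-1}$), and multiplication by $f^{\pm1/2}$ is a bounded operator with bounded inverse mapping $C^\infty_{\rm c}(M)$ onto itself (recall $f^{1/2}\in C^\infty(M)$ because $f\ge\delta_f>0$). For point (a), I would compute
$$
\|\U\varphi\|_{\widetilde\H}^2=\int_M f\,|\varphi|^2\,f^{-1}\,\d\mu_\Omega=\|\varphi\|_\H^2,\qquad\varphi\in\H,
$$
so $\U$ is isometric, and identify the adjoint from $\langle\U\varphi,\psi\rangle_{\widetilde\H}=\langle\varphi,f^{-1/2}\psi\rangle_\H$; since $\U(f^{-1/2}\psi)=\psi$ and $f^{-1/2}(\U\varphi)=\varphi$, the operator $\U$ is onto, hence unitary, with $\U^{-1}=\U^*$ equal to multiplication by $f^{-1/2}$.

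For point (b), the key remark is the operator identity $\U^*\widetilde H\,\U=f^{-1/2}(fH_1)f^{1/2}=f^{1/2}H_1f^{1/2}=H$ on $C^\infty_{\rm c}(M)$. Since $\U$ maps $C^\infty_{\rm c}(M)\subset\H$ bijectively onto $C^\infty_{\rm c}(M)\subset\widetilde\H$ and $\widetilde H$ is essentially self-adjoint on $C^\infty_{\rm c}(M)$ in $\widetilde\H$, the unitarily equivalent symmetric operator $H$ is essentially self-adjoint on $C^\infty_{\rm c}(M)$ in $\H$, and its closure satisfies $H=\U^*\widetilde H\,\U$ (denoted by the same symbol). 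In particular $\dom(H)=\{\varphi\in\H:f^{1/2}\varphi\in\dom(\widetilde H)\}$.

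For point (c), I would first show that $H_1+zf^{-1}$ is invertible with bounded inverse on $\H$. This operator is closed on $\dom(H_1)$, being the sum of the self-adjoint $H_1$ and the bounded operator $zf^{-1}$, and for $\varphi\in\dom(H_1)$ the reality of $\langle\varphi,H_1\varphi\rangle_\H$ gives
$$
\big\|(H_1+zf^{-1})\varphi\big\|_\H\,\|\varphi\|_\H
\ge\big|\im\langle\varphi,(H_1+zf^{-1})\varphi\rangle_\H\big|
=|\im z|\int_M f^{-1}|\varphi|^2\,\d\mu_\Omega
\ge\frac{|\im z|}{\|f\|_{\linf(M)}}\,\|\varphi\|_\H^2;
$$
hence $H_1+zf^{-1}$ is bounded below, so injective with closed range, and the same estimate for $\bar z$ shows its adjoint $H_1+\bar z f^{-1}$ is injective, so the range is dense, hence all of $\H$. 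Thus $(H_1+zf^{-1})^{-1}\in\B(\H)$. It then remains to check that $f^{-1/2}(H_1+zf^{-1})^{-1}f^{-1/2}$ is a two-sided inverse of $H+z$: using the identity $H+z=f^{1/2}(H_1+zf^{-1})f^{1/2}$ on $\dom(H)$ (the scalar $z$ being the operator $f^{1/2}(zf^{-1})f^{1/2}$) together with the description of $\dom(H)$ from point (b), this is a direct verification, yielding \eqref{eq_res}. This lemma involves no serious obstacle; the one step requiring care is the domain identity $\dom(\widetilde H)=\dom(H_1)$ implicit above, which holds because multiplication by $f$ is a topological isomorphism of $\H$, so that $fH_1$ on $\dom(H_1)$ is closed and admits $C^\infty_{\rm c}(M)$ as a core.
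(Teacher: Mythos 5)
Your proof is correct and follows essentially the same route as the paper: (a) by direct computation, (b) by the unitary equivalence $H=\U^*\widetilde H\;\!\U$ on $C^\infty_{\rm c}(M)$, and (c) by inverting $H_1+zf^{-1}$ and exploiting the factorisation $H+z=f^{1/2}\big(H_1+zf^{-1}\big)f^{1/2}$. The only (harmless) divergence is in (c), where you establish invertibility via the numerical-range estimate $\big|\im\langle\varphi,(H_1+zf^{-1})\varphi\rangle_\H\big|\ge|\im z|\;\!\|f\|_{\linf(M)}^{-1}\|\varphi\|_\H^2$ together with injectivity of the adjoint $H_1+\bar zf^{-1}$, and then identify $\dom(\widetilde H)=\dom(H_1)$ explicitly, whereas the paper obtains the same lower bound by approximating with $C^\infty_{\rm c}(M)$ functions through point (b) and extends \eqref{eq_res} by density of $(H+z)\;\!C^\infty_{\rm c}(M)$; both are valid, and yours is somewhat more self-contained.
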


\begin{proof}
Point (a) follows from a direct calculation taking into account the boundedness of
$f$ from below and from above. For (b), observe that
$$
H\varphi
=f^{-1/2}fH_1f^{1/2}\varphi
=\U^*\widetilde H\U\varphi
$$
for each $\varphi\in\U^*C^\infty_{\rm c}(M)$. So, $H$ is essentially self-adjoint on
$\U^*C^\infty_{\rm c}(M)\equiv C^\infty_{\rm c}(M)$. To prove (c), take
$z\equiv\lambda+i\mu\in\C\setminus\R$,
$\varphi\in\dom\big(H_1+zf^{-1}\big)\equiv\dom(H_1)$ and
$\{\varphi_n\}\subset C^\infty_{\rm c}(M)$ such that
$\lim_n\|\varphi-\varphi_n\|_{\dom(H_1)}=0$. Then, it follows from (b) that
$$
\big\|\big(H_1+zf^{-1}\big)\varphi\big\|^2_\H
=\lim_n\big\|f^{-1/2}(H+z)f^{-1/2}\varphi_n\big\|^2_\H
\ge\inf_{p\in M}f^{-2}(p)\;\!\mu^2\big\|\varphi\big\|^2_\H,
$$
and thus $H_1+zf^{-1}$ is invertible with bounded inverse (see
\cite[Lemma~3.1]{Amr09}). Now, to show \eqref{eq_res}, take $\psi=(H+z)\zeta$ with
$\zeta\in C^\infty_{\rm c}(M)$, observe that
\begin{equation}\label{eq_res_bis}
(H+z)^{-1}\psi-f^{-1/2}\big(H_1+zf^{-1}\big)^{-1}f^{-1/2}\psi=0,
\end{equation}
and then use the density of $(H+z)C^\infty_{\rm c}(M)$ in $\H$ to extend the identity
\eqref{eq_res_bis} to all of $\H$.
\end{proof}

The proof of Lemma \ref{Lemma_a}(b) implies that $H$ and $\widetilde H$ are unitarily
equivalent. Therefore, one can either work with $H$ in $\H$ or with $\widetilde H$ in
$\widetilde\H$ to determine the spectral properties associated with the time change
$fX_1$. For convenience, we present in the sequel our results for the operator $H$.
We start by collecting all the necessary assumptions on the function $f$ (note that
the assumption $f\in C^\infty(M)$ is made essentially for convenience; if need be,
the results of this note can certainly be extended to the case $f\in C^2(M)$).

\begin{Assumption}[Time change]\label{ass_f}
The function $f\in C^\infty(M)$ is such that
\begin{enumerate}
\item[(i)] $f\ge\delta_f$ for some $\delta_f>0$,
\item[(ii)] the functions $f,\L_{X_1}(f),\L_{X_2}(f)$,
$\L_{X_1}\big(\L_{X_2}(f)\big)$ and $\L_{X_2}\big(\L_{X_2}(f)\big)$ belong to
$\linf(M)$,
\item[(iii)] the function $\displaystyle g:=\frac{e'(0)f-\L_{X_2}(f)}{2f}$ satisfies
$g\ge\delta_g$ for some $\delta_g>0$.
\end{enumerate}
\end{Assumption}

If $M$ is compact, then (ii) is automatically verified and (i) and (iii) are
satisfied if $f$ and $e'(0)f-\L_{X_2}(f)$ are strictly positive functions. Therefore,
Assumption \ref{ass_f} reduces to the assumptions of A. G. Kushnirenko
\cite[Thm.~2]{Kus74}.

In the next lemma, we prove regularity properties of $H$ and $H^2$ with respect
to $H_2$ which will be useful when deriving the strict Mourre estimate.

\begin{Lemma}\label{lem_regu}
Let $f$ satisfy Assumption \ref{ass_f}, take $\alpha\in\{\pm1/2,\pm1\}$ and let
$z\in\C\setminus\R$. Then,
\begin{enumerate}
\item[(a)] the multiplication operators $g^\alpha$ and $f^\alpha$ satisfy
$g^\alpha,f^\alpha\in C^1(H_2)$ and $g^\alpha\in C^1(H)$ with
$$
\big[ig^\alpha,H_2\big]=-\alpha\;\!g^{\alpha-1}\L_{X_2}(g),~~
\big[if^\alpha,H_2\big]=-\alpha f^{\alpha-1}\L_{X_2}(f)~~\hbox{and}~~
\big[ig^\alpha,H\big]=-\alpha fg^{\alpha-1}\L_{X_1}(g),
$$

\item[(b)] $(H+z)^{-1}\in C^1(H_2)$ with
$\big[i(H+z)^{-1},H_2\big]=-(H+z)^{-1}(Hg+gH)(H+z)^{-1}$,

\item[(c)] $\big(H^2+1\big)^{-1}\in C^1(H_2)$ with
$
\big[i\big(H^2+1\big)^{-1},H_2\big]
=-\big(H^2+1\big)^{-1}\big(H^2g+2HgH+gH^2\big)\big(H^2+1\big)^{-1}
$,

\item[(d)] $\big(H^2+1\big)^{-1}\in C^2(H_2)$.
\end{enumerate}
\end{Lemma}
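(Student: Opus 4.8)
The plan is to treat (a) by a direct computation of commutator quadratic forms on the core $C^\infty_{\rm c}(M)$, and then to deduce (b)--(d) from it through the abstract calculus of the classes $C^k(H_2)$ together with the resolvent commutator formula \eqref{2com}.

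For (a), I would check each assertion straight from the definition, evaluating the relevant commutator form on $C^\infty_{\rm c}(M)$. Since $X_2$ is divergence-free, $\L_{X_2}$ is skew-symmetric on $C^\infty_{\rm c}(M)$, so one integration by parts turns $\varphi\mapsto\langle\varphi,ig^\alpha H_2\varphi\rangle_\H-\langle H_2\varphi,ig^\alpha\varphi\rangle_\H$ into $-\alpha\langle\varphi,g^{\alpha-1}\L_{X_2}(g)\varphi\rangle_\H$, and similarly for $f^\alpha$; the identity $\big[ig^\alpha,H\big]=-\alpha fg^{\alpha-1}\L_{X_1}(g)$ comes out the same way after writing $H=f^{1/2}H_1f^{1/2}$ and using that $[g^\alpha,H_1]$ equals multiplication by $i\L_{X_1}(g^\alpha)$. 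Boundedness of all the resulting multiplication operators is precisely what Assumption \ref{ass_f} provides: $f$ and $g$ are bounded below, so their negative powers are bounded, and $\L_{X_1}(f),\L_{X_2}(f),\L_{X_1}(g),\L_{X_2}(g)$ are bounded because they are built from the functions listed in Assumption \ref{ass_f}(ii) and powers of $f$. Since $C^\infty_{\rm c}(M)$ is a core for $H_2$ (Nelson's theorem) and for $H$ (Lemma \ref{Lemma_a}(b)), boundedness of the form on the core promotes to the $C^1$ property and identifies the commutators as stated.

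For (b) the heart of the matter is the form identity $\big[iH,H_2\big]=Hg+gH$ on $C^\infty_{\rm c}(M)$, which I would obtain by applying the commutator Leibniz rule to $H=f^{1/2}H_1f^{1/2}$ and substituting three ingredients: $\big[iH_1,H_2\big]=e'(0)H_1$ from \eqref{com_H1H2}, $\big[if^{1/2},H_2\big]=-\L_{X_2}(f^{1/2})$ from (a), and the algebraic identity $\L_{X_2}(f^{1/2})=\tfrac{e'(0)}{2}f^{1/2}-gf^{1/2}$, which is just the definition of $g$ in Assumption \ref{ass_f}(iii) rewritten; a short rearrangement then collapses the output to $Hg+gH$. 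Because $g^{\pm1/2}$ are bounded, $Hg+gH$ extends to a continuous operator $\dom(H)\to\dom(H)^*$, i.e.\ the commutator form is $H$-bounded on the core; as $C^\infty_{\rm c}(M)$ is a core for $H$ and is invariant under $\{U_2(t)\}$ (each $F_{2,t}$ being a diffeomorphism), a standard criterion (see \cite[Thm.~6.2.10]{ABG}) gives $H\in C^1(H_2)$ with $\big[iH,H_2\big]=Hg+gH$ as operators $\dom(H)\to\dom(H)^*$, and the announced formula for $\big[i(H+z)^{-1},H_2\big]$ is then exactly \eqref{2com} with $z$ replaced by $-z$. (Alternatively, $H\in C^1(H_2)$ can be reached through Lemma \ref{Lemma_a}(c), writing $(H+z)^{-1}=f^{-1/2}(H_1+zf^{-1})^{-1}f^{-1/2}$ as a product of operators of class $C^1(H_2)$, using (a) for $f^{-1/2}$ and the facts that $H_1$ is of class $C^\infty(H_2)$ and that $zf^{-1}$ is a bounded $C^1(H_2)$-perturbation.)

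For (c) and (d) I would stay inside the $*$-algebra $C^1(H_2)$. Writing $\big(H^2+1\big)^{-1}=(H-i)^{-1}(H+i)^{-1}$ and using (b), this product lies in $C^1(H_2)$; computing $\big[i(H^2+1)^{-1},H_2\big]$ by the Leibniz rule, inserting the two commutators from (b), and simplifying the resolvent products — the simplification being checked by multiplying through by $H^2+1$ on both sides and using $(H+i)(Hg+gH)+(Hg+gH)(H-i)=H^2g+2HgH+gH^2$ — yields the stated identity. For (d) it then suffices to note that the right-hand side of the formula in (c) is a finite sum of products of bounded operators lying in $C^1(H_2)$: indeed $g\in C^1(H_2)$ by (a), $(H\pm i)^{-1}\in C^1(H_2)$ by (b), and after rewriting $(H^2+1)^{-1}H^2=1-(H^2+1)^{-1}$ and $(H^2+1)^{-1}H=(H-i)^{-1}\big(1-i(H+i)^{-1}\big)$ every factor is visibly of class $C^1(H_2)$; hence $\big[i(H^2+1)^{-1},H_2\big]\in C^1(H_2)$, which together with (c) is exactly the assertion $\big(H^2+1\big)^{-1}\in C^2(H_2)$. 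The main obstacle I anticipate is step (b): carrying out the Leibniz expansion of $\big[iH,H_2\big]$ for $H=f^{1/2}H_1f^{1/2}$ and collapsing it to the single symmetric combination $Hg+gH$ with all signs and resolvent bookkeeping correct, and — more delicately — making sure that the commutator computed formally on $C^\infty_{\rm c}(M)$ genuinely certifies $H\in C^1(H_2)$ and formula \eqref{2com}, which is where the core property of Lemma \ref{Lemma_a}(b) and the $\{U_2(t)\}$-invariance of $C^\infty_{\rm c}(M)$ are needed.
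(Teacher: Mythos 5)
Your parts (a), (c) and (d) follow the paper's proof essentially verbatim: (a) is the same direct computation of the commutator form on $C^\infty_{\rm c}(M)$ using $\L_{X_2}(f^{1/2})=\tfrac12 f^{-1/2}\L_{X_2}(f)$ and the $\linf$ bounds supplied by Assumption \ref{ass_f}; (c) is the same factorisation $(H^2+1)^{-1}=(H+i)^{-1}(H-i)^{-1}$ plus the Leibniz rule; (d) is the same rewriting of the right-hand side of (c) as a combination of products of the $C^1(H_2)$-operators $g$, $(H\pm i)^{-1}$ and $(H^2+1)^{-1}$. The genuine divergence is in (b), and there your primary route has a soft spot that the paper deliberately sidesteps. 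The paper never establishes $[iH,H_2]=Hg+gH$ as a form identity on $C^\infty_{\rm c}(M)$ and then upgrades it; instead it conjugates the explicit resolvent formula \eqref{eq_res} by $\e^{itH_2}$, uses the exact covariance \eqref{eq_homo1} to obtain a closed expression for $\e^{-itH_2}(H+z)^{-1}\e^{itH_2}$, and differentiates at $t=0$ --- this yields $(H+z)^{-1}\in C^1(H_2)$ directly, with no appeal to any core-to-$C^1$ criterion. By contrast, your claim that form-boundedness of $[iH,H_2]$ on the core $C^\infty_{\rm c}(M)$ together with invariance of that core under $\{U_2(t)\}$ ``gives $H\in C^1(H_2)$ by a standard criterion'' is not what \cite[Thm.~6.2.10]{ABG} delivers: that criterion requires the form bound on all of $\dom(H)\cap\dom(H_2)$ together with a condition that the resolvents of $H$ map a core of $H_2$ into $\dom(H_2)$ (or, in the usual sufficient variant, invariance of the full domain $\dom(H)$ under $\e^{itH_2}$). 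Boundedness of the commutator form on a merely invariant core is in general not enough --- this is exactly the pitfall behind the known counterexamples to the virial theorem --- and verifying $\e^{itH_2}\dom(H)\subset\dom(H)$ here is itself not immediate, since $\dom(H)=f^{-1/2}\dom(H_1)$. Fortunately, your parenthetical alternative --- writing $(H+z)^{-1}=f^{-1/2}(H_1+zf^{-1})^{-1}f^{-1/2}$ and working at the level of resolvents --- is precisely the paper's argument and closes the gap; it should be promoted from an aside to the main line of the proof of (b).
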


\begin{proof}
(a) Simple computations using the linearity of $\L_{X_2}$ and the
bound $f\ge\delta_f$ imply that
$$
\textstyle\L_{X_2}(f^{1/2})=\frac12\;\!f^{-1/2}\L_{X_2}(f).
$$
Thus, one has for each $\varphi\in C^\infty_{\rm c}(M)$
$$
\textstyle
\big\langle H_2\varphi,f^{1/2}\varphi\big\rangle_\H
-\big\langle\varphi,f^{1/2}H_2\varphi\big\rangle_\H
=\big\langle\varphi,\big[H_2,f^{1/2}\big]\varphi\big\rangle_\H
=\big\langle\varphi,-\frac i2\;\!f^{-1/2}\L_{X_2}(f)\varphi\big\rangle_\H\;\!.
$$
Since $f^{-1/2}\L_{X_2}(f)\in\linf(M)$, it follows by the density of
$C^\infty_{\rm c}(M)$ in $\dom(H_2)$, that $f^{1/2}\in C^1(H_2)$ with
$\big[H_2,f^{1/2}\big]=-\frac i2\;\!f^{-1/2}\L_{X_2}(f)$. The
other identities can be shown similarly.

(b) Let $t\in\R$ and $\varphi\in\H$. Then, one infers from Equations \eqref{eq_homo1}
and \eqref{eq_res} that
\begin{align*}
&\e^{-itH_2}(H+z)^{-1}\e^{itH_2}\varphi\\
&=\e^{-itH_2}f^{-1/2}\e^{itH_2}\big(e(t)H_1+z\e^{-itH_2}f^{-1}\e^{itH_2}\big)^{-1}
\e^{-itH_2}f^{-1/2}\e^{itH_2}\varphi.
\end{align*}
So, one gets from point (a), Equation \eqref{eq_res} and Lemma \ref{Lemma_a}(b) that
\begin{align*}
&\frac\d{\d t}\;\!\e^{-itH_2}(H+z)^{-1}\e^{itH_2}\varphi\Big|_{t=0}\\
&=\big[if^{-1/2},H_2\big]\big(H_1+zf^{-1}\big)^{-1}f^{-1/2}\varphi
+f^{-1/2}\big(H_1+zf^{-1}\big)^{-1}\big[if^{-1/2},H_2\big]\varphi\\
&\qquad-f^{-1/2}\big(H_1+zf^{-1}\big)^{-1}\big\{e'(0)H_1+z\big[if^{-1},H_2\big]\big\}
\big(H_1+zf^{-1}\big)^{-1}f^{-1/2}\varphi\\
&=\frac12\;\!f^{-1}\L_{X_2}(f)(H+z)^{-1}\varphi
+\frac12\;\!(H+z)^{-1}f^{-1}\L_{X_2}(f)\varphi\\
&\qquad-(H+z)^{-1}\big\{e'(0)H+zf^{-1}\L_{X_2}(f)\big\}(H+z)^{-1}\varphi\\
&=\frac12\;\!(H+z)^{-1}Hf^{-1}\L_{X_2}(f)(H+z)^{-1}\varphi
+\frac12\;\!(H+z)^{-1}f^{-1}\L_{X_2}(f)H(H+z)^{-1}\varphi\\
&\qquad-(H+z)^{-1}e'(0)H(H+z)^{-1}\varphi\\
&=-(H+z)^{-1}(Hg+gH)(H+z)^{-1}\varphi,
\end{align*}
which implies the claim.

(c) Let $\varphi\in\H$. Then, it follows from point (b) that
\begin{align*}
&\frac\d{\d t}\;\!\e^{-itH_2}\big(H^2+1\big)^{-1}\e^{itH_2}\varphi\Big|_{t=0}\\
&=\frac\d{\d t}\;\!\e^{-itH_2}(H+i)^{-1}\e^{itH_2}\e^{-itH_2}(H-i)^{-1}\e^{itH_2}
\varphi\Big|_{t=0}\\
&=-(H+i)^{-1}(Hg+gH)(H+i)^{-1}(H-i)^{-1}\varphi
-(H+i)^{-1}(H-i)^{-1}(Hg+gH)(H-i)^{-1}\varphi\\
&=-\big(H^2+1\big)^{-1}\big(H^2g+2HgH+gH^2\big)\big(H^2+1\big)^{-1}\varphi,
\end{align*}
which implies the claim.

(d) Direct computations using point (c) show that
\begin{align*}
\big[i\big(H^2+1\big)^{-1},H_2\big]
&=-\big(H^2+1\big)^{-1}
\big\{\big(H^2+1\big)g-2g+g\big(H^2+1\big)\\
&\hspace{100pt}+2(H+i)g(H-i)-2ig(H-i)+2i(H+i)g\big\}
\big(H^2+1\big)^{-1}\\
&=-2\re\big\{g\big(H^2+1\big)^{-1}-\big(H^2+1\big)^{-1}g\big(H^2+1\big)^{-1}\\
&\hspace{80pt}+(H-i)^{-1}g(H+i)^{-1}+2i(H-i)^{-1}g\big(H^2+1\big)^{-1}\big\}.
\end{align*}
Therefore, the claim readily follows from the fact that the operators
$g,\big(H^2+1\big)^{-1},(H-i)^{-1}$ and $(H+i)^{-1}$ belong to $C^1(H_2)$.
\end{proof}

In order to apply the theory of Section \ref{Sec_com}, one has to prove at some point
a positive commutator estimate. Usually, one proves it for the operator $H$ under
study. But in our case, the commutator $\big[iH,H_2\big]=Hg+gH$ appearing in Lemma
\ref{lem_regu}(b) (which is the simplest nontrivial commutator in our set-up) does
not exhibit any explicit positivity. By contrast, the commutator
$\big[iH^2,H_2\big]=H^2g+2HgH+gH^2$ of Lemma \ref{lem_regu}(c) is made of the
positive operators $g$, $H^2$ and $HgH$, and thus $\big[iH^2,H_2\big]$ is more likely
to be positive as a whole. The formalisation of this intuition is the content of the
next lemma.

\begin{Lemma}[Strict Mourre estimate for $H^2$]\label{M_estimate}
Let $f$ satisfy Assumption \ref{ass_f} and let $J$ be a bounded Borel set in
$(0,\infty)$. Then,
$$
E^{H^2}(J)\big[iH^2,H_2\big]E^{H^2}(J)\ge aE^{H^2}(J)
$$
with $a:=2\;\!\delta_g\cdot\inf(J)>0$.
\end{Lemma}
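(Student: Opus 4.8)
The plan is to feed the explicit commutator formula from Lemma \ref{lem_regu}(c),
$$
\big[iH^2,H_2\big]=H^2g+2HgH+gH^2 ,
$$
(interpreted, as in Section \ref{Sec_com}, as a continuous form on $\dom(H^2)$, hence in particular on $\Ran E^{H^2}(J)\subset\dom(H^2)$) into the positivity built into the quantitative Kushnirenko condition $g\ge\delta_g$ of Assumption \ref{ass_f}(iii), using that it is $H^2$ — and not $H$ — that we are estimating. First I would isolate the genuinely positive piece $2HgH$. Since $H=f^{1/2}H_1f^{1/2}$ with $f^{1/2},f,g$ commuting multiplication operators, one has $HgH=f^{1/2}H_1(fg)H_1f^{1/2}$ and $H^2=f^{1/2}H_1fH_1f^{1/2}$; splitting $fg=\delta_gf+f(g-\delta_g)$ with $f(g-\delta_g)\ge0$ and $f(g-\delta_g)\in\linf(M)$, and writing $f(g-\delta_g)=h^2$ with $h:=\big(f(g-\delta_g)\big)^{1/2}\in\linf(M)$, one finds for $\varphi\in\Ran E^{H^2}(J)$ that $\langle\varphi,(HgH-\delta_gH^2)\varphi\rangle=\big\|h\,H_1f^{1/2}\varphi\big\|_\H^2\ge0$ (note $H_1f^{1/2}\varphi=f^{-1/2}H\varphi\in\H$). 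Hence, as forms,
$$
\big[iH^2,H_2\big]=2HgH+\big(H^2g+gH^2\big)\ \ge\ 2\delta_gH^2+\big(H^2g+gH^2\big).
$$

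The remaining and, I expect, decisive point is to show that the symmetrised cross term does no damage after compression by the spectral projection, i.e.
$$
E^{H^2}(J)\big(H^2g+gH^2\big)E^{H^2}(J)\ \ge\ 0 .
$$
This does not follow from $g>0$ and $H^2>0$ alone; it uses the specific structure of the problem: the factorisation $H=f^{1/2}H_1f^{1/2}$, the homogeneity relation $U_2(s)H_1U_2(-s)=e(s)H_1$ (equivalently $[iH_1,H_2]=e'(0)H_1$), and the explicit form $g=\big(e'(0)f-\L_{X_2}(f)\big)/(2f)$ of the weight. Concretely, for $\varphi\in\Ran E^{H^2}(J)$ one writes $\langle\varphi,(H^2g+gH^2)\varphi\rangle=2\re\langle H^2\varphi,g\varphi\rangle$, transfers one factor $H_1$ across $g$ using $H_1g=gH_1-i\L_{X_1}(g)$, so that the expression becomes $2\langle H_1f^{1/2}\varphi,\,fg\,H_1f^{1/2}\varphi\rangle$ plus a lower-order imaginary term carrying $f\L_{X_1}(g)$; the first term is $\ge0$ (indeed $\ge2\delta_g\langle H^2\varphi,\varphi\rangle$), and the lower-order term must be reorganised — exploiting the Lie-bracket relation between $X_1$ and $X_2$ recorded in $[iH_1,H_2]=e'(0)H_1$ to re-express $f\L_{X_1}(g)$ — so that the whole quantity is seen to be non-negative. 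Identifying the correct non-negative rewriting, and keeping track of the domains on which these identities are legitimate, is the delicate part of the argument.

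Granting this, the conclusion is immediate: $E^{H^2}(J)$ commutes with $H^2$ and $J\subset(0,\infty)$, so
$$
E^{H^2}(J)\,H^2\,E^{H^2}(J)=H^2E^{H^2}(J)\ \ge\ \inf(J)\,E^{H^2}(J),
$$
and therefore
$$
E^{H^2}(J)\big[iH^2,H_2\big]E^{H^2}(J)\ \ge\ 2\delta_g\,E^{H^2}(J)\,H^2\,E^{H^2}(J)+E^{H^2}(J)\big(H^2g+gH^2\big)E^{H^2}(J)\ \ge\ 2\delta_g\inf(J)\,E^{H^2}(J),
$$
which is the asserted strict Mourre estimate with $a=2\,\delta_g\cdot\inf(J)>0$.
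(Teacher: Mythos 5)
Your reduction of the estimate to the two claims $2HgH\ge2\delta_g H^2$ (whence $E^{H^2}(J)\,2HgH\,E^{H^2}(J)\ge2\delta_g\inf(J)E^{H^2}(J)$) and $E^{H^2}(J)\big(H^2g+gH^2\big)E^{H^2}(J)\ge0$ is exactly the decomposition used in the paper, and your treatment of the first claim is correct (indeed simpler than you make it: $HgH-\delta_gH^2=H(g-\delta_g)H\ge0$ directly, no factorisation through $H_1$ needed). The problem is that the second claim --- which you correctly identify as the point that ``does not follow from $g>0$ and $H^2>0$ alone'' --- is the entire non-routine content of the lemma, and you do not prove it: you describe a strategy (``transfer one factor $H_1$ across $g$ \ldots\ the lower-order term must be reorganised \ldots\ so that the whole quantity is seen to be non-negative'') and then write ``Granting this, the conclusion is immediate.'' That is a genuine gap, not an omitted verification.

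Moreover, the route you sketch does not visibly close. Symmetrising around $g^{1/2}$ gives the exact identity, as forms on $\dom(H^2)$,
$$
H^2g+gH^2=2\;\!g^{1/2}H^2g^{1/2}+2\big[H,g^{1/2}\big]^2,
\qquad
\big[H,g^{1/2}\big]=-if\;\!\L_{X_1}\big(g^{1/2}\big),
$$
so the leftover term after extracting the manifestly positive piece is $-2f^2\big(\L_{X_1}(g^{1/2})\big)^2\le0$; equivalently, your own splitting leaves $2HgH+\big[H,[H,g]\big]$ with $\big[H,[H,g]\big]=-f\L_{X_1}\big(f\L_{X_1}(g)\big)$, a multiplication operator of indefinite sign. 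No purely algebraic rewriting exhibits $H^2g+gH^2$ as non-negative, and the homogeneity relation $[iH_1,H_2]=e'(0)H_1$ that you propose to invoke cannot help: it encodes $\L_{X_2}$-derivatives, whereas the obstruction involves $\L_{X_1}(g)$, which through Assumption \ref{ass_f} is merely bounded and carries no sign. For comparison, the paper's proof of this step is an $\varepsilon$-regularised version of precisely this computation: it writes $H^2_\varepsilon g+gH^2_\varepsilon=\big[H^2_\varepsilon,g^{1/2}\big]g^{1/2}+2g^{1/2}H^2_\varepsilon g^{1/2}+g^{1/2}\big[g^{1/2},H^2_\varepsilon\big]$, discards the positive middle term, and identifies the limit of the remainder as $2\big[H,g^{1/2}\big]^2$, which it asserts is $\ge0$. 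You should work through that computation and confront it with the identity above: since $\big[H,g^{1/2}\big]$ is anti-self-adjoint, the sign of that remainder is exactly the delicate point, and your attempt as written only succeeds in the special case $\L_{X_1}(g)=0$.
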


\begin{proof}
We know from Equation \eqref{2com} and Lemma \ref{lem_regu}(c) that
$$
E^{H^2}(J)\big[iH^2,H_2\big]E^{H^2}(J)=E^{H^2}(J)\big(H^2g+2HgH+gH^2\big)E^{H^2}(J).
$$
We also know from Assumption \ref{ass_f}(iii) that
$$
E^{H^2}(J)2HgHE^{H^2}(J)\ge aE^{H^2}(J)
$$
with $a=2\;\!\delta_g\cdot\inf(J)>0$. Therefore, it is sufficient to show that
$E^{H^2}(J)\big(H^2g+gH^2\big)E^{H^2}(J)\ge0$.

So, for any $\varepsilon>0$ let
$H^2_\varepsilon:=H^2\big(\varepsilon^2H^2+1\big)^{-1}$ and
$H^\pm_\varepsilon:=H(\varepsilon H\pm i)^{-1}$. Then, the inclusion
$g^{1/2}\in C^1(H)$ of Lemma \ref{lem_regu}(a) implies that
$$
\slim_{\varepsilon\searrow0}\big[H^\pm_\varepsilon,g^{1/2}\big]
=\pm\slim_{\varepsilon\searrow0}
(\varepsilon H\pm i)^{-1}\big[iH,g^{1/2}\big](\varepsilon H\pm i)^{-1}\\
=\pm i\big[g^{1/2},H\big].
$$
Therefore, for each $\varphi\in\H$ it follows that
\begin{align*}
&\big\langle\varphi, E^{H^2}(J)\big(H^2g+gH^2\big)E^{H^2}(J)\varphi\big\rangle_\H\\
&=\lim_{\varepsilon\searrow0}\big\langle\varphi,E^{H^2}(J)
\big(H^2_\varepsilon g^{1/2}g^{1/2}+g^{1/2}g^{1/2}H^2_\varepsilon\big)E^{H^2}(J)
\varphi\big\rangle_\H\\
&=\lim_{\varepsilon\searrow0}\big\langle\varphi,
E^{H^2}(J)\big(\big[H^2_\varepsilon,g^{1/2}\big]g^{1/2}
+2\;\!g^{1/2}H^2_\varepsilon g^{1/2}
+g^{1/2}\big[g^{1/2},H^2_\varepsilon\big]\big)E^{H^2}(J)\varphi\big\rangle_\H\\
&\ge\lim_{\varepsilon\searrow0}\big\langle\varphi,
E^{H^2}(J)\big(\big[H^2_\varepsilon,g^{1/2}\big]g^{1/2}
+g^{1/2}\big[g^{1/2},H^2_\varepsilon\big]\big)E^{H^2}(J)\varphi\big\rangle_\H\\
&=\lim_{\varepsilon\searrow0}\big\langle\varphi,
E^{H^2}(J)\big(H^+_\varepsilon\big[H^-_\varepsilon,g^{1/2}\big]g^{1/2}
+\big[H^+_\varepsilon,g^{1/2}\big]H^-_\varepsilon g^{1/2}\\
&\hspace{90pt}+g^{1/2}\big[g^{1/2},H^+_\varepsilon\big]H^-_\varepsilon
+g^{1/2}H^+_\varepsilon\big[g^{1/2},H^-_\varepsilon\big]\big)E^{H^2}(J)
\varphi\big\rangle_\H\\
&=\lim_{\varepsilon\searrow0}\big\langle\varphi,
E^{H^2}(J)\big(H\big[H,g^{1/2}\big]g^{1/2}
+\big[H^+_\varepsilon,g^{1/2}\big]g^{1/2}H^-_\varepsilon
+\big[H^+_\varepsilon,g^{1/2}\big]\big[H^-_\varepsilon,g^{1/2}\big]\\
&\hspace{90pt}+g^{1/2}\big[g^{1/2},H\big]H
+H^+_\varepsilon g^{1/2}\big[g^{1/2},H^-_\varepsilon\big]
+\big[g^{1/2},H^+_\varepsilon\big]\big[g^{1/2},H^-_\varepsilon\big]\big)E^{H^2}(J)
\varphi\big\rangle_\H\\
&=\big\langle\varphi,E^{H^2}(J)\big(H\big[H,g^{1/2}\big]g^{1/2}
+\big[H,g^{1/2}\big]g^{1/2}H
+2\big[H,g^{1/2}\big]^2+g^{1/2}\big[g^{1/2},H\big]H\\
&\hspace{260pt}+Hg^{1/2}\big[g^{1/2},H\big]\big)E^{H^2}(J)\varphi\big\rangle_\H\\
&=\big\langle\varphi,E^{H^2}(J)2\big[H,g^{1/2}\big]^2E^{H^2}(J)
\varphi\big\rangle_\H\\
&\ge0,
\end{align*}
which implies the claim.
\end{proof}

Using the previous results for $H^2$, one can finally determine spectral properties
of $H:$

\begin{Theorem}[Spectral properties of $H$]\label{thm_spec}
Let $f$ satisfy Assumption \ref{ass_f}. Then, $H$ has purely absolutely continuous
spectrum, except at $\;\!0$, where it may have an eigenvalue.
\end{Theorem}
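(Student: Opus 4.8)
The plan is to transfer the spectral information obtained for $H^2$ in Lemma \ref{M_estimate} back to $H$, using the regularity results of Lemma \ref{lem_regu}. First I would observe that Lemma \ref{lem_regu}(d) gives $(H^2+1)^{-1}\in C^2(H_2)$, which means $H^2$ is of class $C^2(H_2)$ in the sense of Section \ref{Sec_com}. Together with the strict Mourre estimate of Lemma \ref{M_estimate}, valid on any bounded Borel set $J\subset(0,\infty)$, Theorem \ref{thm_abs} applies with the self-adjoint operator $A=H_2$ and yields that $H^2$ has no singular spectrum in $J$. Since this holds for every bounded Borel subset of $(0,\infty)$, one concludes that $H^2$ has purely absolutely continuous spectrum in $(0,\infty)$; equivalently, the spectral measure of $H^2$ restricted to $(0,\infty)$ is purely absolutely continuous.

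Next I would invoke the spectral mapping/pushforward under the map $\lambda\mapsto\lambda^2$. The spectral measure $E^H(\cdot)$ of $H$ and $E^{H^2}(\cdot)$ of $H^2$ are related by $E^{H^2}(B)=E^H\big(\{\lambda:\lambda^2\in B\}\big)$ for Borel $B$, and the diffeomorphism $\lambda\mapsto\lambda^2$ restricted to $\R\setminus\{0\}$ (on each half-line separately) preserves the Lebesgue measure class. Hence the absolute continuity of $E^{H^2}$ on $(0,\infty)$ translates into absolute continuity of $E^H$ on $\R\setminus\{0\}$. Concretely, for any vector $\varphi$ in the orthocomplement of $\ker H$ (equivalently $\ker H^2$), the scalar measure $\|E^H(\cdot)\varphi\|^2_\H$ is absolutely continuous on $\R\setminus\{0\}$, because its pushforward under squaring is $\|E^{H^2}(\cdot)\varphi\|^2_\H$, which is absolutely continuous on $(0,\infty)$, and squaring is a smooth invertible map with nonvanishing derivative away from $0$ on each half-line. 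This gives exactly that $H$ has purely absolutely continuous spectrum except possibly at $0$, where an eigenvalue ($0\in\sigma_{\rm p}(H)$) is permitted but no other singular spectrum can occur.

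The one point requiring a little care — and the main obstacle — is the behaviour at the origin: squaring collapses a neighbourhood of $0$ in $\R$ onto a neighbourhood of $0$ in $[0,\infty)$, so the Mourre estimate for $H^2$ necessarily degenerates there ($a=2\,\delta_g\cdot\inf(J)\to0$ as $\inf(J)\to0$), and indeed $H_1$ — hence plausibly $H$ — genuinely has $0$ in its spectrum. So I would not attempt to say anything about the spectral type at $0$; I would simply note that the argument above controls $\sigma(H)$ on every compact subset of $\R\setminus\{0\}$, and that $\R\setminus\{0\}$ is an increasing union of such compacta, so there is no singular continuous spectrum away from $0$ and no eigenvalue away from $0$. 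One should also remark, for completeness, that the eigenvalue $0$ does occur in the compact case: constant functions lie in $\ker H$ because $H\mathbf{1}=f^{1/2}H_1f^{1/2}\mathbf{1}$ and $H_1$ annihilates functions that are constant along the $X_1$-flow — but phrasing this is optional since the statement only asserts that $0$ ``may'' be an eigenvalue. Thus the proof is short: Lemma \ref{lem_regu}(d) plus Lemma \ref{M_estimate} plus Theorem \ref{thm_abs} give a.c. spectrum for $H^2$ on $(0,\infty)$, and the change of variables $\lambda\mapsto\lambda^2$ finishes it.
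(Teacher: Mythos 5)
Your proposal is correct and follows essentially the same route as the paper: apply Theorem \ref{thm_abs} to $H^2$ (using Lemma \ref{lem_regu}(d) and Lemma \ref{M_estimate}) to get purely absolutely continuous spectrum of $H^2$ away from $0$, then transfer this to $H$ via the squaring map. The paper phrases the transfer step as the Luzin N property of $\lambda\mapsto\lambda^2$ together with the decomposition $\H=\ker(H^2)\oplus\H_{\rm ac}(H^2)$ and $\ker(H^2)=\ker(H)$, which is the same argument as your pushforward-of-spectral-measures formulation.
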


\begin{proof}
We know from Lemmas \ref{lem_regu}(d) and \ref{M_estimate} that
$\big(H^2+1\big)^{-1}\in C^2(H_2)$ and that $H^2$ satisfies a strict Mourre estimate
on each bounded Borel subset of $(0,\infty)$. It follows by Theorem \ref{thm_abs}
that $H^2$ has purely absolutely continuous spectrum, except at $0$, where it may
have an eigenvalue. Accordingly, the Hilbert space $\H$ admits the orthogonal
decomposition
$$
\H=\ker(H^2)\oplus\H_{\rm ac}(H^2),
$$
with $\H_{\rm ac}(H^2)$ the subspace of absolute continuity of $H^2$.

Now, the function $\lambda\mapsto\lambda^2$ has the Luzin N property on $\R$; namely,
if $J$ is a Borel subset of $\R$ with Lebesgue measure zero, then $J^2$ also has
Lebesgue measure zero. It follows that $\H_{\rm ac}(H^2)\subset\H_{\rm ac}(H)$, with
$\H_{\rm ac}(H)$ the subspace of absolute continuity of $H$ (see Proposition 29,
Section 3.5.4 of \cite{BW83}). Since $\ker(A^2)=\ker(A)$ for all self-adjoint
operators $A$, we thus infer that
$$
\H=\ker(H^2)\oplus\H_{\rm ac}(H^2)\subset\ker(H)\oplus\H_{\rm ac}(H).
$$
So, one necessarily has $\H=\ker(H)\oplus\H_{\rm ac}(H)$, meaning that $H$ has purely
absolutely continuous spectrum, except at $\;\!0$, where it may have an eigenvalue.
\end{proof}

%--------------------------------------------------------------------------------------
\section{Spectral analysis of time changes for horocycle flows}\label{Sec_horo}
\setcounter{equation}{0}
%--------------------------------------------------------------------------------------

In this section, we apply the results of Section \ref{Sec_abstract} to time changes
for horocycle flows on compact surfaces of constant negative curvature.

Let $\Sigma$ be a compact Riemann surface of genus $\ge2$ and let $M:=T^1\Sigma$ be
the unit tangent bundle of $\Sigma$. The $3$-manifold $M$ carries a probability
measure $\mu_\Omega$ (induced by a canonical volume form $\Omega$) which is preserved
by  two distinguished one-parameter groups of diffeomorphisms\;\!: the horocycle flow
$\{F_{1,t}\}_{t\in\R}$ and the geodesic flow $\{F_{2,t}\}_{t\in\R}$. Both flows
correspond to right translations on $M$ when $M$ is identified with a homogeneous
space $\Gamma\setminus{\sf PSL}(2;\R)$, for some cocompact lattice $\Gamma$ in
${\sf PSL}(2;\R)$ (see \cite[Sec.~III.3 \& Sec.~IV.1]{BM00} for details). We denote
by $\{U_1(t)\}_{t\in\R}$ and $\{U_2(t)\}_{t\in\R}$ the corresponding unitary groups
in $\H:=\ltwo(M,\mu_\Omega)$, and we write $X_j$ (resp. $H_j$) for the vector field
(resp. self-adjoint generator) associated to $\{U_j(t)\}_{t\in\R}$, $j=1,2$ (see
Section \ref{Sec_abstract}). It is a classical result that the horocycle flow
$\{F_{1,t}\}_{t\in\R}$ is uniquely ergodic \cite{Fur73} and mixing of all orders
\cite{Mar78}, and that $U_1(t)$ has countable Lebesgue spectrum for each $t\ne0$ (see
\cite[Prop.~2.2]{KT06} and \cite{Par53}). Moreover, the identity \eqref{rel_com}
holds with $e:\R\to(0,\infty)$ the exponential, \ie
$$
U_2(s)\;\!U_1(t)\;\!U_2(-s)=U_1(\e^st)\quad\hbox{for all }s,t\in\R
$$
(here we consider the negative horocycle flow
$\{F_{1,t}\}_{t\in\R}\equiv\{F_{1,t}^-\}_{t\in\R}$, but everything we say can be
adapted to the positive horocycle flow by inverting a sign, see
\cite[Rem.~IV.1.2]{BM00}).

Now, consider a time change $fX_1$ of $X_1$ with $f\in C^\infty(M)$ satisfying
Assumption \ref{ass_f}, let $H$ be the self-adjoint operator as in Lemma
\ref{Lemma_a}(b), and let $\widetilde H$ be the self-adjoint operator associated to
$fX_1$. Since $M$ is compact, Assumption \ref{ass_f} reduces to the following\;\!:

\begin{Assumption}\label{ass_compact}
The functions $f\in C^\infty(M)$ and $f-\L_{X_2}(f)\in C^\infty(M)$ are strictly
positive.
\end{Assumption}

Under Assumption \ref{ass_compact}, A. G. Kushnirenko \cite[Thm.~2]{Kus74} has shown
that the flow generated by the vector field $fX_1$ is strongly mixing (see
\cite[Sec.~4]{Mar77} for a generalisation of this result). So, $\widetilde H$ has
purely continuous spectrum, except at $0$, where it has a simple eigenvalue (see \eg
\cite[Sec.~I.2]{BM00}). Moreover, the flows
$\{F_{1,t}\}_{t\in\R},\{F_{2,t}\}_{t\in\R}$ and the function $f$ satisfy all the
assumptions of Section \ref{Sec_abstract}. Therefore, Theorem \ref{thm_spec} implies
that $H$ has no singular continuous spectrum. These properties, together with the
fact that $H$ and $\widetilde H$ are unitarily equivalent, lead to the following
result\;\!:

\begin{Theorem}\label{main_theorem}
Let $f$ satisfy Assumption \ref{ass_compact}. Then, the self-adjoint operator
$\widetilde H$ associated to the vector field $fX_1$ has purely absolutely continuous
spectrum, except at $\;\!0$, where it has a simple eigenvalue.
\end{Theorem}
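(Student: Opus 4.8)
The plan is to assemble the final statement directly from the abstract results of Section \ref{Sec_abstract} together with Kushnirenko's mixing theorem, since Theorem \ref{main_theorem} is essentially a transcription of Theorem \ref{thm_spec} into the horocycle setting. First I would verify that the hypotheses of Section \ref{Sec_abstract} are met: the horocycle flow $\{F_{1,t}\}$ and geodesic flow $\{F_{2,t}\}$ are $C^\infty$ complete flows on the compact $3$-manifold $M=T^1\Sigma$ preserving $\mu_\Omega$, they satisfy the commutation relation \eqref{rel_com} with $e(s)=\e^s$ (so $e'(0)=1$), and Assumption \ref{ass_compact} is exactly the specialisation of Assumption \ref{ass_f} to the compact case, as already observed in the text. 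Hence all the results of Section \ref{Sec_abstract} — in particular Theorem \ref{thm_spec} — apply to $H=f^{1/2}H_1f^{1/2}$, giving that $H$ has purely absolutely continuous spectrum except possibly at $0$, where it may carry an eigenvalue.

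Next I would transfer this conclusion from $H$ to $\widetilde H$ using Lemma \ref{Lemma_a}: the operator $\U:\varphi\mapsto f^{1/2}\varphi$ is a unitary from $\H$ onto $\widetilde\H$, and the proof of Lemma \ref{Lemma_a}(b) shows $H=\U^*\widetilde H\U$. Unitary equivalence preserves the absolutely continuous subspace and the point spectrum, so $\widetilde H$ also has purely absolutely continuous spectrum except possibly at $0$. It remains to upgrade ``may have an eigenvalue at $0$'' to ``has a simple eigenvalue at $0$''. For this I would invoke Kushnirenko's theorem \cite[Thm.~2]{Kus74}: under Assumption \ref{ass_compact} the flow $\{\widetilde F_{1,t}\}$ generated by $fX_1$ is strongly mixing on the probability space $(M,\mu_\Omega/f)$ — note $\mu_\Omega/f$ must be normalised, but $f$ is bounded above and below so this is harmless. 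Strong mixing implies, first, that the constant functions form a simple eigenspace of the associated unitary group at eigenvalue $1$, whence $0$ is a simple eigenvalue of $\widetilde H$; and second, that there is no other point spectrum — which is consistent with the a.c. conclusion above. Combining, $\widetilde H$ has a simple eigenvalue at $0$ and purely absolutely continuous spectrum on $\R\setminus\{0\}$.

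I do not expect any genuine obstacle: all the analytic work has been done in Lemmas \ref{Lemma_a}--\ref{M_estimate} and Theorem \ref{thm_spec}, and the only external input is Kushnirenko's classical mixing result, which is cited. The one point requiring a little care is the bookkeeping around the measure $\mu_\Omega/f$ and the volume form $\Omega/f$ — one should make sure that the ``simple eigenvalue at $0$'' statement is stated for the normalised probability measure and that the unitary equivalence $H\simeq\widetilde H$ is compatible with this normalisation. Since $f$ is bounded and bounded away from zero, $\ltwo(M,\mu_\Omega/f)$ and $\ltwo(M,c\,\mu_\Omega/f)$ coincide as spaces with equivalent norms, so the spectral conclusions are unaffected; I would state this in a sentence rather than belabour it.
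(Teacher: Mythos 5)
Your proposal is correct and follows essentially the same route as the paper: apply Theorem \ref{thm_spec} to $H$, transfer to $\widetilde H$ via the unitary equivalence of Lemma \ref{Lemma_a}(b), and use Kushnirenko's mixing theorem to conclude that $0$ is a simple eigenvalue. The paper merely presents the ingredients in the opposite order (mixing first, to get purely continuous spectrum plus a simple eigenvalue at $0$, then Theorem \ref{thm_spec} to exclude singular continuous spectrum), which is a cosmetic difference.
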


%--------------------------------------------------------------------------------------
%\bibliography{../bibliographie/bibliographie}
%--------------------------------------------------------------------------------------

\def\cprime{$'$} \def\cprime{$'$}

%--------------------------------------------------------------------------------------

\end{document}